\DeclareMathOperator{\ex}{ex}
\newcommand\ceil[1]{\lceil#1\rceil}
\title{Sharper bounds and structural results for minimally nonlinear 0-1 matrices}
\author{Jesse Geneson\\
\small Department of Mathematics \& Statistics\\[-0.8ex]
\small San Jose State University \\[-0.8ex] 
\small San Jose, CA, U.S.A.\\
\small\tt jesse.geneson@sjsu.edu\\
\and
Shen-Fu Tsai\\
\small Google LLC\\[-0.8ex]
\small Kirkland, WA, U.S.A.\\
\small\tt parity@\{google,gmail\}.com}
\begin{document}

\maketitle


\begin{abstract}
The extremal function $ex(n, P)$ is the maximum possible number of ones in any 0-1 matrix with $n$ rows and $n$ columns that avoids $P$. A 0-1 matrix $P$ is called minimally nonlinear if $ex(n, P) = \omega(n)$ but $ex(n, P') = O(n)$ for every $P'$ that is contained in $P$ but not equal to $P$.

Bounds on the number of ones and the number of columns in a minimally nonlinear 0-1 matrix with $k$ rows were found in (CrowdMath, 2018). In this paper, we improve the upper bound on the number of ones in a minimally nonlinear 0-1 matrix with $k$ rows from $5k-3$ to $4k-4$. As a corollary, this improves the upper bound on the number of columns in a minimally nonlinear 0-1 matrix with $k$ rows from $4k-2$ to $4k-4$.

We also prove that there are not more than four ones in the top and bottom rows of a minimally nonlinear matrix and that there are not more than six ones in any other row of a minimally nonlinear matrix. Furthermore, we prove that if a minimally nonlinear 0-1 matrix has ones in the same row with exactly $d$ columns between them, then within these columns there are at most $2d-1$ rows above and $2d-1$ rows below with ones.
\end{abstract}

\section{Introduction}

The 0-1 matrix $M$ \emph{contains} 0-1 matrix $P$ if some submatrix of $M$ either equals $P$ or can be turned into $P$ by changing some ones to zeroes. Otherwise we say that $M$ \emph{avoids} $P$. The function $\ex(n, P)$ is defined as the maximum number of ones in any 0-1 matrix with $n$ rows and $n$ columns that avoids $P$. This function has been applied to many problems in combinatorics and discrete geometry, including the Stanley-Wilf conjecture \cite{MT}, the maximum number of unit distances in a convex $n$-gon \cite{Fure}, and robot navigation problems \cite{Mit}.

The 0-1 matrix extremal function has a linear lower bound of $n$ for all 0-1 matrices except those with all zeroes or just one entry. Call a 0-1 matrix $P$ \emph{linear} if $\ex(n, P) = O(n)$ and \emph{nonlinear} if $\ex(n, P) = \omega(n)$. F\"{u}redi and Hajnal's problem, which is only partially answered, is to find all linear 0-1 matrices $P$ \cite{FH}. Marcus and Tardos proved that every permutation matrix $P$ is linear \cite{MT}, solving the Stanley-Wilf conjecture, and this result was later extended to double permutation matrices \cite{G}.  

One way to approach F\"{u}redi and Hajnal's problem is to find all 0-1 matrices on the border of linearity. A 0-1 matrix $P$ is called \emph{minimally nonlinear} if $\ex(n, P) = \omega(n)$ but $\ex(n, P') = O(n)$ for every $P'$ that is contained in $P$ but not equal to $P$. By definition, every minimally nonlinear 0-1 matrix must avoid every other nonlinear 0-1 matrix.

Keszegh \cite{Ke} constructed a class $H_{k}$ of 0-1 matrices for which $\ex(n, H_{k}) = \Theta(n \log{n})$ and conjectured the existence of infinitely many minimally nonlinear 0-1 matrices contained in the class. This conjecture was confirmed in \cite{G}, without actually constructing an infinite family of minimally nonlinear 0-1 matrices.

CrowdMath \cite{crowdmath} proved that a minimally nonlinear 0-1 matrix with $k$ rows has at most $5k-3$ ones and $4k-2$ columns and bounded the number of minimally nonlinear 0-1 matrices with $k$ rows, also finding analogous results for ordered bipartite graphs and forbidden sequences. They posed the problems of finding the maximum number of columns in, the maximum number of ones in, and the total number of minimally nonlinear 0-1 matrices with $k$ rows.

In this paper, we sharpen all three bounds from the CrowdMath paper. In Section \ref{sb}, we prove that a minimally nonlinear 0-1 matrix with $k$ rows has at most $4k-4$ ones and $4k-4$ columns. As a corollary, this gives an improved bound on the number of minimally nonlinear 0-1 matrices with $k$ rows. In Section \ref{ss}, we prove several structural results about 0-1 matrices, including that there are not more than four ones in the top and bottom rows of a minimally nonlinear matrix and that there are not more than six ones in any other row of a minimally nonlinear matrix. We also prove that if a minimally nonlinear 0-1 matrix has ones in the same row with exactly $d$ columns between them, then within these columns there are at most $2d-1$ rows above and $2d-1$ rows below with ones.

\section{Sharper bounds}\label{sb}

Given a matrix $M$, we define a \emph{reflection} of $M$ as any matrix obtained by a horizontal or vertical flip of $M$, and we define a \emph{rotation} of $M$ as any matrix obtained by rotating $M$ by $\pi / 2$, $\pi$, or $3\pi / 2$. Moreover, we define a \emph{transformation} of $M$ as any matrix obtained by a rotation or a reflection of $M$. For many proofs in this paper, we will refer to the minimally nonlinear patterns $R$, $Q_{1}$, and $S_{1}$ below respectively~\cite{FH}.

$$
\begin{pmatrix}
\bullet & \bullet \\
\bullet & \bullet \\
\end{pmatrix}
\begin{pmatrix}
\bullet &         & \bullet \\
\bullet & \bullet \\
\end{pmatrix}
\begin{pmatrix}
\bullet &         & \bullet \\
        & \bullet &         & \bullet\\
\end{pmatrix}
$$

\begin{theorem}
Minimally nonlinear 0-1 matrices with $k$ rows have at most $4k-4$ ones.
\end{theorem}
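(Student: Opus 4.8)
The plan is to reduce to a matrix avoiding all three of $R$, $Q_1$, $S_1$ and then bound its ones by an amortized count across the $k$ rows. The starting observation, which I would take from the literature on small forbidden patterns, is that each of $R$, $Q_1$, $S_1$ has a superlinear extremal function. Granting this, let $P$ be minimally non-linear with $k$ rows. If $P$ is one of $R$, $Q_1$, $S_1$, then $k=2$ and $P$ has exactly $4=4k-4$ ones, so the bound holds. Otherwise any containment of $R$, $Q_1$, or $S_1$ in $P$ would exhibit it as a \emph{proper} subpattern, forcing its extremal function to be $O(n)$ and contradicting superlinearity; hence $P$ avoids all three. Since a minimally non-linear matrix has no empty rows or columns, I may assume throughout that $P$ has $k\ge 2$ rows, no empty rows or columns, and avoids $R$, $Q_1$, $S_1$.

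First I would translate the three avoidances into local structure. Avoiding $R$ means any two rows share a one in at most one column; consequently, if each non-topmost one is assigned to its nearest one above in the same column, then for every ordered pair of rows $(i',i)$ with $i'<i$ at most one one of row $i$ can be assigned into row $i'$ — two such assignments would complete the four corners of an $R$. Avoiding $Q_1$ and $S_1$ then rules out, respectively, the \emph{hook} and the fully interleaved \emph{zigzag} configurations between two rows, forcing the way a lower row attaches to the ones above it to be non-crossing and nested. The central lemma I would aim to prove from these is a local budget: under simultaneous avoidance of $R$, $Q_1$, $S_1$, the ones of any single row can be charged upward in at most four distinct ways, with the extreme rows admitting still fewer.

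Granting the budget, I would finish by a discharging argument. Give each one an initial charge of $1$ and redistribute charge along the rows so that, for instance, each of the two extreme rows ends with charge at most $2$ and each of the $k-2$ interior rows with charge at most $4$. Since the total charge equals the number of ones, this yields at most $4(k-2)+2+2=4k-4$ ones. The forbidden patterns enter precisely to certify that no row can overflow its budget during the sweep.

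The hard part will be the local-budget lemma together with the exact constants. A crude per-row count allows as many as six ones in an interior row, so compressing the sweep to an amortized four per row requires invoking all three forbidden configurations at once, in a case analysis of how a row's ones may sit relative to everything above them — in particular excluding the crossing patterns that $Q_1$ and $S_1$ forbid but that $R$ alone permits. Squeezing the additive constant from $4k$ down to $4k-4$ by a careful treatment of the top and bottom rows is the second delicate point, and I expect the interplay of the $Q_1$- and $S_1$-constraints to be where the analysis is most intricate.
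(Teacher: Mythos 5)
Your opening reduction is fine and matches the paper's starting point: if $P$ is not itself one of $R$, $Q_1$, $S_1$ (or a reflection), then minimality forces $P$ to avoid all of them, since each of these patterns has a superlinear extremal function. But from there the proposal is a plan rather than a proof, and the plan's core --- the ``local budget lemma'' --- is precisely the theorem restated, not a step toward it. You assert that charge can be redistributed so that the two extreme rows end with at most $2$ and each interior row with at most $4$, and you yourself flag that establishing this is ``the hard part''; no discharging rules are ever specified, so nothing is actually proved. The difficulty is real and not cosmetic: a minimally non-linear matrix can genuinely have six ones in an interior row and four in an extreme row (these are exactly the sharp per-row bounds proved later in the paper), so no local, single-row argument can yield budgets of $4$ and $2$; charge must flow between far-apart rows, and designing and verifying those rules against $R$, $Q_1$, $S_1$ is the entire content of the theorem. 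The specific charging you do define --- assigning each non-topmost one to the nearest one above it in its column --- also does not bound anything useful: every column's topmost one receives no assignment, and there are as many of those as nonempty columns (which can itself be as large as $4k-4$), so the $R$-avoidance fact that each ordered pair of rows supports at most one such assignment gives no per-row or amortized control.

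For contrast, the paper resolves the same obstacle (that minimal nonlinearity is not preserved when you delete a row, so naive induction fails) by strengthening the induction hypothesis: it defines a purely structural class of ``potentially mnl'' matrices --- avoiding $R$, $Q_1$, $S_1$ and reflections, plus conditions on isolated ones in boundary and interior columns and no empty columns --- which contains all minimally non-linear matrices and is recoverable after deleting a carefully chosen row. The chosen row is one that encompasses no other row, taken rightmost and then with fewest ones; such a row has at most two ones in adjacent columns, and repairing the conditions after its removal deletes at most two more ones, giving the loss of at most $4$ per step that the induction needs. If you want to salvage your discharging framing, you would still need an analogue of this row-selection and repair analysis; as written, the proposal has a genuine gap where that analysis should be.
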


In order to prove this theorem, we introduce some more terminology and prove an intermediate lemma. The statement holds for $S_1$, $Q_1$, $R$ and their transformations, so we define an \emph{exclusive minimally nonlinear matrix} to be a minimally nonlinear 0-1 matrix that is not equal to $S_1$, $Q_1$, $R$ or their transformations. It suffices to prove that every exclusive minimally nonlinear matrix with $k$ rows has at most $4k-4$ ones. 

Call a column \emph{singular} if it has a single one and any adjacent columns have a one in the same row. Call a 0-1 matrix $P$ \emph{potentially minimally nonlinear} if the following three conditions hold: (1) $P$ avoids $S_1$, $Q_1$, $R$, and all of their transformations, (2) $P$ has no singular column, and (3) $P$ has no empty column. The next lemma shows that it suffices to prove that every potentially minimally nonlinear 0-1 matrix with $k$ rows has at most $4k-4$ ones.

\begin{lemma}
Every exclusive minimally nonlinear 0-1 matrix $P$ is potentially minimally nonlinear.
\end{lemma}

\begin{proof}
$P$ must avoid $S_1$, $Q_1$, $R$, and all of their transformations, since $P$ is minimally nonlinear and $P$ is not equal to $S_1$, $Q_1$, $R$ or their transformations. Moreover $P$ has no singular column and no empty column, since $P$ is minimally nonlinear, and adding a singular column or an empty column to a linear matrix cannot change the extremal function to nonlinear, see Lemma 2.3(d,f,g) of \cite{gtar}. Thus $P$ is potentially minimally nonlinear.\end{proof}

\begin{theorem}
Every potentially minimally nonlinear 0-1 matrix with $k$ rows has at most $4k-4$ ones.
\end{theorem}

\begin{proof} 
We prove this by induction. For $k = 2$, note that any potentially minimally nonlinear matrix $P$ with two rows has at most one column with two ones since $P$ avoids $R$. If $P$ has a column with two ones, then that is the only column in $P$, or else $P$ would contain a singular or empty column or $Q_1$ or its transformation. If every column of $P$ has a single one, then $P$ has at most four total columns since $P$ has no singular or empty columns and $P$ avoids $S_1$ and its transformations.

For the inductive step, we introduce some terminology for relationships between rows in a potentially minimally nonlinear matrix $A$. We say that \emph{row $r$ encompasses row $s$} if row $r$ has a one somewhere to the left of the leftmost one of row $s$, and row $r$ also has a one somewhere to the right of the rightmost one of row $s$. Note that if row $r$ encompasses row $s$, then row $r$ has no ones between two ones of row $s$, or else $A$ would contain $S_1$ or its transformation. Note that if row $s$ has a one in a column between two ones of row $r$, then row $r$ must encompass row $s$ or else A would contain $S_1$, $Q_1$, or their transformations.

For the inductive hypothesis, suppose that every potentially minimally nonlinear 0-1 matrix with $k$ rows has at most $4k-4$ ones, and let $A$ be a potentially minimally nonlinear 0-1 matrix with $k+1$ rows. Let row $t$ denote the row that does not encompass any other row and has the rightmost one among such rows. If such rows are not unique then we pick one arbitrarily as row $t$.

Row $t$ must not have ones in non-adjacent columns, or else either row $t$ would encompass another row or $A$ would contain an empty column, singular column, $Q_1$, $S_1$, or their transformations. Hence row $t$ has at most two ones, and they are adjacent. If we remove row $t$ from $A$ to obtain $A'$, then $A'$ is not necessarily potentially minimally nonlinear since $A'$ could have empty or singular columns.

If $A'$ has a singular column, then it only has one singular column or else $A$ contains $R$. Moreover if $A'$ has a singular column $c$, then $A'$ has no empty column, since a singular column cannot be next to an empty column by definition. Removing $c$ from $A'$ produces a 0-1 matrix $A''$ with no empty columns and no singular columns, since any singular columns in $A''$ would have been singular in $A'$, but we removed the singular column from $A'$ to produce $A''$. Moreover $A''$ avoids $S_1$, $Q_1$, $R$, and all of their transformations. Thus $A''$ is potentially minimally nonlinear and has at most $4k-4$ ones by inductive hypothesis, so $A$ has at most $4k-1$ ones.

If $A'$ has no singular column, then it has at most two empty columns, which are adjacent. If we remove the empty columns, then we will produce at most two singular columns which are adjacent to the columns that we remove. We remove the empty columns and any resulting singular columns from $A'$ to produce $A''$ with no empty columns and no singular columns. $A''$ is potentially minimally nonlinear, so $A''$ has at most $4k-4$ ones by inductive hypothesis. We removed at most two resulting singular columns after we removed the empty columns from $A'$, and we deleted at most two ones when we removed row $t$, so $A$ has at most $4k$ ones.
\end{proof}

The fact that a minimally nonlinear 0-1 matrix with $k$ rows has at most $4k-4$ columns follows immediately from the last result.

\begin{corollary}
Minimally nonlinear 0-1 matrices with $k$ rows have at most $4k-4$ columns. 
\end{corollary}

\begin{proof}
This follows from the last result since a minimally nonlinear 0-1 matrix cannot have any empty columns, see \cite{Ke} and Lemma 2.3(f) of \cite{gtar}.
\end{proof}

We obtain a slight improvement on the upper bound for the number of minimally nonlinear 0-1 matrices with $k$ rows in \cite{crowdmath} using the last corollary with the same proof as in \cite{crowdmath}.

\begin{corollary}
For $k > 2$, the number of minimally nonlinear 0-1 matrices with $k$ rows is at most $\sum_{i = \ceil{(k+4)/4}}^{4k-4}(i^k - (i-1)^k)k^{i-1}$.
\end{corollary}

\section{Structural results}\label{ss}

In this section, we find bounds on the number of ones in each row of a minimally nonlinear 0-1 matrix, in addition to other structural results about minimally nonlinear 0-1 matrices. For a few of the results in this section, we will refer to the minimally nonlinear patterns $Q_{3}$ and $S_{2}$ respectively below~\cite{FH}.

$$
\begin{pmatrix}
\bullet &         & \bullet \\
        & \bullet &        \\
        &         & \bullet \\
\end{pmatrix}
\begin{pmatrix}
\bullet &         &         &         \\
        &         & \bullet &         \\
        & \bullet &         & \bullet\\
\end{pmatrix}
$$

We start with a surprising lemma about the structure of minimally nonlinear patterns that is based on $Q_{1}$, $S_{1}$, $S_{2}$, and their transformations.

\begin{lemma}
Cross lemma:
Suppose that $M$ is a minimally nonlinear 0-1 matrix. If $r_1<r_2<r_3$, $c_1<c_2<c_3$, and $M_{r_1,c_2}=M_{r_3,c_2}=M_{r_2,c_1}=M_{r_2,c_3}=1$, then $M_{x,y}=0$ if $x\notin(r_1,r_3)$ and $y\notin(c_1,c_3)$.
\end{lemma}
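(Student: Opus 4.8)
The plan is to argue by contradiction: put a one in one of the four forbidden regions and exhibit a rotation or reflection of $S_1$, $Q_1$, or $S_2$ formed from it together with the arms of the cross. Since $M$ is minimally non-linear while $S_1$, $Q_1$, and $S_2$ each have a superlinear extremal function, $M$ cannot contain any of them; the same holds for all of their reflections and rotations, whose extremal functions agree with the originals. The row-reversal and column-reversal maps send the cross to another cross, fix this forbidden family, and act transitively on the four regions in the statement; the transpose does likewise and, crucially, preserves the bottom-right region while swapping its horizontal and vertical boundaries. Hence it suffices to rule out $M_{y,x}=1$ with $y\ge r_3$ and $x\ge c_3$.

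First I would dispose of the generic case $y>r_3$, $x>c_3$, which I expect to be the main obstacle. The naive attempt fails: the three ones $(r_2,c_3)$, $(r_3,c_2)$, $(y,x)$ form only a $3\times 3$ permutation matrix, which has a linear extremal function and is therefore harmless, so no contradiction can come from three ones alone. The key step is to fold in the fourth arm $(r_2,c_1)$ as well: on the rows $r_2<r_3<y$ and the columns $c_1<c_2<c_3<x$, the four ones $(r_2,c_1)$, $(r_2,c_3)$, $(r_3,c_2)$, $(y,x)$ sit in positions $(1,1),(1,3),(2,2),(3,4)$, which is exactly the $180^\circ$ rotation of $S_2$. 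This contradicts the fact that $M$ avoids all rotations of $S_2$.

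It remains to treat the boundary of the region. If $y=r_3$ and $x>c_3$, then on rows $r_2,r_3$ and columns $c_1<c_2<c_3<x$ the ones $(r_2,c_1),(r_2,c_3),(r_3,c_2),(r_3,x)$ occupy positions $(1,1),(1,3),(2,2),(2,4)$ and form $S_1$ itself. The mirror case $y>r_3$, $x=c_3$ is the transpose of this one (the transpose fixes the bottom-right region and interchanges its two boundaries), and directly the ones $(r_1,c_2),(r_2,c_3),(r_3,c_2),(y,c_3)$ give the transpose of $S_1$. Finally, if $y=r_3$ and $x=c_3$, then on rows $r_2,r_3$ and columns $c_1<c_2<c_3$ the ones $(r_2,c_1),(r_2,c_3),(r_3,c_2),(r_3,c_3)$ occupy $(1,1),(1,3),(2,2),(2,3)$, a horizontal reflection of $Q_1$. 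Each boundary case is thus impossible, which exhausts the bottom-right region; the other three regions follow by the symmetry reduction above.

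The conceptual content, and the step I expect to require the most care, is the generic interior case: the obvious three-point argument produces only a permutation pattern and says nothing, and the lemma's real force is the observation that the remaining arm of the cross upgrades this configuration into a rotation of $S_2$. The degenerate boundary cases are then routine bookkeeping, each collapsing to $S_1$, its transpose, or a reflection of $Q_1$.
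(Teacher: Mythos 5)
Your proof is correct and takes the same approach as the paper, whose entire proof is the one-line assertion that a one in a forbidden region would yield $S_1$, $S_2$, $Q_1$, $Q_3$, or one of their reflections or rotations; you have simply made explicit the symmetry reduction and the case analysis that the paper leaves to the reader. (A minor difference: your witness choices never need $Q_3$ --- in the boundary case $x=c_3$, $y>r_3$ you use the transpose of $S_1$ via $(r_1,c_2),(r_2,c_3),(r_3,c_2),(y,c_3)$, where the paper's list suggests the ones $(r_2,c_1),(r_2,c_3),(r_3,c_2),(y,c_3)$ forming $Q_3$ itself.)
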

\begin{proof}
If not, $M$ contains $S_1$, $S_2$, $Q_1$, or one of their transformations.
\end{proof}

We introduce some notation for the remaining results in this section. Let $M$ be a matrix and $a$, $b$, $c$, $d$ be integers. Define $M_{a,[b,c]}$ to be the submatrix of $M$ obtained by restricting to row $a$ and columns $b$ through $c$. Define $M_{[a,b],c}$ to be the submatrix of $M$ obtained by restricting to rows $a$ through $b$ and column $c$. Finally, define $M_{[a,b],[c,d]}$  to be the submatrix of $M$ obtained by restricting to rows $a$ through $b$ and columns $c$ through $d$. The next two lemmas allow us to obtain bounds on the number of ones in each row of a minimally nonlinear 0-1 matrix.

\begin{lemma}
\label{lemma:two_and_one}
Suppose that $M$ is a minimally nonlinear $h\times w$ matrix not equal to $S_2$, $Q_3$, or their transformations. If $M_{h,c}=M_{h,d}=1$ and $d-c>1$, then submatrix $M_{1,[c+1,d-1]}$ contains a one.
\end{lemma}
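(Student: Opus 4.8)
The plan is to argue by contradiction. Assume that the top row $M_{1,\cdot}$ has no one in any of the columns $c+1,\dots,d-1$; note this interval is nonempty since $d-c>1$, and each of these interior columns carries a one somewhere (a minimally non-linear matrix has no empty columns). The goal is to show that this assumption forces $M$ to contain one of the patterns it must avoid, namely $S_1,Q_1,R,S_2,Q_3$ or one of their reflections or rotations (the same patterns the Cross lemma relies on), or else to violate the no-redundant-column condition that minimally non-linear matrices satisfy. The key observation driving everything is that containment only requires the selected submatrix to \emph{dominate} the pattern, so I never need to control where the zeros sit, only to locate ones in the right relative positions.

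The main work is a case split on the interior columns. First I would ask whether some interior column $j\in(c,d)$ has a one in a \emph{middle} row $r$ with $1<r<h$. If no such column exists, then every interior column carries its only one in row $h$; in particular column $c+1$ has a single one in row $h$, flanked by the ones $M_{h,c}=1$ and $M_{h,c+2}=1$ in the adjacent columns to its left and right (using $c+2\le d$), which directly violates condition (2). If such a pair $(r,j)$ does exist, I would then inspect the top row, which is nonempty (there are no empty rows, by transposing the no-empty-columns fact) and whose ones all lie in columns $\le c$ or $\ge d$ by the contradiction hypothesis. If the top row has a one in some column $e$ strictly outside $[c,d]$, then the three rows $1,r,h$ together with the four columns $e,c,j,d$ carry ones at the four positions prescribed by $S_2$ (or its horizontal reflection, depending on whether $e<c$ or $e>d$), and $M$ contains $S_2$ — a contradiction.

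The remaining, and hardest, case is when every top-row one sits exactly at column $c$ or $d$. Here I would use that the top row is nonempty to assume without loss of generality that $M_{1,c}=1$ (the case $M_{1,d}=1$ is handled by a horizontal reflection). Then, in local coordinates, the rows $1,r,h$ and columns $c,j,d$ carry ones precisely at $(1,1)$, $(2,2)$, $(3,1)$, and $(3,3)$, which is exactly the $180^\circ$ rotation of $Q_3$; since $M$ avoids all rotations of $Q_3$, this is the final contradiction. I expect this confined case to be the main obstacle: it is the step where it is least obvious that a forbidden configuration must appear, and the surprise is that the single top-row one pinned at the endpoint $c$, rather than strictly outside, swaps $S_2$ for a rotated $Q_3$. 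I would also need to dispatch the finitely many excluded matrices $S_1,Q_1,R$ and their reflections by direct inspection, since the structural conditions (no empty columns, condition (2)) are invoked through the \emph{potentially mnl} characterization that holds for every minimally non-linear matrix apart from those.
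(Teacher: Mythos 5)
Your proposal is correct and takes essentially the same route as the paper: both argue by contradiction, dispose of the degenerate case where every interior column has its only one in row $h$ via the redundant-single-one-column fact (your condition (2) versus the paper's direct minimality/removal argument), and then combine a middle-row one in the strip with $M_{h,c}$, $M_{h,d}$ to show that any top-row one yields $S_2$ (if strictly outside $[c,d]$) or a $180^\circ$ rotation of $Q_3$ (if at column $c$ or $d$), so the top row would be empty, a contradiction. One caveat, shared equally by the paper's own proof: both arguments implicitly assume $M$ is not itself $S_2$ or a rotated $Q_3$ (there the containment is not proper, so no contradiction follows), and in fact those matrices violate the statement as written --- an exclusion the paper only makes explicit in Lemma~\ref{lemma:two_in_row}, so your omission of them from the list $S_1, Q_1, R$ of matrices to inspect is no worse than the original.
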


\begin{proof}
Suppose not. Ones in $M_{[1,h],[c+1,d-1]}$ cannot be only in row $h$, as otherwise removing the columns that contain these ones still results in a nonlinear matrix, see Lemma 2.3(g) of \cite{gtar}, contradicting the minimal nonlinearity of $M$. Since $S_2$ and $Q_3$ are both nonlinear~\cite{FH}, $M$ avoids each of them and their transformations.

Let $x$ be the top one in $M_{[1,h],[c+1,d-1]}$. If there are multiple ones in the same row we arbitrarily pick one as $x$. Note that $M_{1,c}=M_{1,d}=0$ since otherwise $M$ contains a transformation of $Q_3$ on $\{x,M_{h,c},M_{h,d}\}$ and one of $M_{1, c}$ or $M_{1, d}$. Similarly in order for $M$ to avoid $S_2$ and its transformations, $M_{1,[d+1,w]}$ and $M_{1,[1,c-1]}$ must both be empty. Then the top row has no one, a contradiction.
\end{proof}

The same proof method can be used to show the following lemma.

\begin{lemma}
\label{lemma:two_in_row}
Suppose that $M$ is a minimally nonlinear $h\times w$ matrix not equal to $S_2$, $Q_3$, or their transformations. If $M_{r, c} = M_{r,d} = 1$ and $d - c > 1$ for some row $r$, then there are $2$ possibilities, at least one of which must be true:
\begin{enumerate}
\item There exists a row $r'$ above $r$ with some one between columns $c+1$ and $d-1$ inclusive, and every row above $r'$ also has some one between columns $c+1$ and $d-1$ inclusive.
\item There exists a row $r'$ below $r$ with some one between columns $c+1$ and $d-1$ inclusive, and every row below $r'$ also has some one between columns $c+1$ and $d-1$ inclusive.
\end{enumerate}
\end{lemma}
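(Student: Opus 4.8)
The plan is to run the argument of Lemma~\ref{lemma:two_and_one} on both sides of row $r$, organised as a single proof by contradiction. The key observation is that it suffices to prove the cleaner statement: \emph{the top row of $M$ has a one somewhere in columns $[c+1,d-1]$, or the bottom row does}. Indeed, if the top row has such a one then possibility (1) holds with $r'=1$, since then the clause ``every row above $r'$'' is vacuous; symmetrically a one in the bottom row gives possibility (2) with $r'=h$. So I would assume for contradiction that neither the top nor the bottom row has a one in $[c+1,d-1]$, and aim to contradict the fact that a minimally non-linear matrix has no empty row.

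Before the main argument I would record three standing facts. First, since $d-c>1$ there is at least one column in $[c+1,d-1]$, and a minimally non-linear matrix has no empty columns, so the block $M_{[1,h],[c+1,d-1]}$ contains a one. Second, not all of its ones can lie in row $r$: if they did, each such one would be the unique one of its column, sitting between the two ones $M_{r,c}$ and $M_{r,d}$, and deleting it would leave a non-linear matrix, contradicting minimal non-linearity. This is exactly the reduction opening the proof of Lemma~\ref{lemma:two_and_one}. Third, because $M$ is minimally non-linear and is not $S_2$ nor $Q_3$ (read up to rotation and reflection), it avoids $Q_3$, $S_2$, and all of their reflections and rotations.

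The heart of the proof is a case analysis on the extremal ones of the block. Let $x=M_{a,e}$ be the topmost one and $z=M_{b,f}$ the bottommost one of $M_{[1,h],[c+1,d-1]}$, so that $a\le b$ and $c<e,f<d$. If $a<r$, then $x$ sits strictly above the pair $M_{r,c},M_{r,d}$, and the assumption that the top row misses $[c+1,d-1]$ forces $a\ge 2$. Applying the mechanism of Lemma~\ref{lemma:two_and_one} to the rows $1<a<r$, a one of row $1$ at column $c$ or $d$ would complete a rotation or reflection of $Q_3$ together with $\{x,M_{r,c},M_{r,d}\}$, while a one of row $1$ strictly left of $c$ or strictly right of $d$ would complete $S_2$ or its reflection; hence row $1$ is entirely empty, a contradiction. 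If instead $b>r$, the top-to-bottom mirror of this argument, using $\{z,M_{r,c},M_{r,d}\}$ and the rows $r<b<h$, forces the bottom row to be empty, again a contradiction. The only surviving case is $a\ge r$ and $b\le r$, which with $a\le b$ gives $a=b=r$, i.e.\ every one of the block lies in row $r$ --- excluded by the second standing fact. Thus some contradiction always arises, so the top or bottom row meets $[c+1,d-1]$, as required.

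I expect the main obstacle to be bookkeeping rather than a new idea: one must match each of the four possible horizontal positions of a putative top-row (respectively bottom-row) one to a specific forbidden copy among $Q_3$, $S_2$ and their symmetries, and verify that the hypothesis guarantees $M$ avoids every such copy. The most delicate point is precisely this exclusion: the hypothesis ``$M$ is not $S_2$ nor $Q_3$'' must be read up to rotation and reflection, since small rotated copies of $Q_3$ are themselves minimally non-linear and violate the stated dichotomy, so they have to be ruled out for the conclusion to hold. Once the forbidden copies are pinned down, the two-sided version of Lemma~\ref{lemma:two_and_one} goes through essentially verbatim.
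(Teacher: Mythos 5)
Your overall strategy is exactly what the paper intends: the paper ``proves'' this lemma only by remarking that the same method as Lemma~\ref{lemma:two_and_one} applies, and your two-sided run of that argument --- taking the topmost and bottommost ones of the block $M_{[1,h],[c+1,d-1]}$, forcing row $1$ (resp.\ row $h$) to be empty via copies of $Q_3$ and $S_2$ up to symmetry, and excluding the all-ones-in-row-$r$ case by the deletion argument --- is the intended fleshing-out. Your bookkeeping is also right: a top-row one at column $c$ or $d$ gives the $180^\circ$ rotation or the vertical flip of $Q_3$, a top-row one outside $[c,d]$ gives $S_2$ or its reflection, and your observation that the hypothesis ``not $S_2$ nor $Q_3$'' must be read up to rotation and reflection is both correct and necessary (the $180^\circ$ rotation of $Q_3$ satisfies the hypothesis on $r,c,d$ and violates both alternatives, so without that reading the lemma is false).

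There is, however, a genuine though small gap: your reduction to the ``cleaner statement'' fails when $r=1$ or $r=h$. Unwinding the quantifiers, possibility (1) is equivalent to ``$r>1$ and row $1$ has a one in $[c+1,d-1]$,'' and (2) to ``$r<h$ and row $h$ has a one there.'' So if $r=1$, possibility (1) can never hold (row $1$ is not above row $1$, and no row is), and the lemma then demands specifically that the \emph{bottom} row meet $[c+1,d-1]$; the disjunction ``top row or bottom row meets the range,'' which is all your contradiction argument establishes, does not deliver this. The repair is immediate and stays inside your framework: when $r=1$, apply Lemma~\ref{lemma:two_and_one} to the vertical reflection of $M$ (minimal non-linearity is preserved under reflection) to get a one of row $h$ in the range, hence (2) with $r'=h$; symmetrically for $r=h$. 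Alternatively, note that for $r=1$ your case ``$b>r$'' already goes through assuming only that row $h$ misses the range: your standing fact 2 forces the block to contain a one strictly below row $r$, and the mirror argument then empties row $h$ without any assumption about row $1$. With that boundary case handled, your proof is correct and coincides with the paper's intended argument.
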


As a corollary, we can derive bounds on the number of ones in rows of a minimally nonlinear 0-1 matrix.

\begin{lemma}
\label{lemma:no_five_ones}
There cannot be more than four ones in the bottom (or top) row of a minimally nonlinear matrix $M$.
\end{lemma}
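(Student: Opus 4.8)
The plan is to prove that the bottom row has at most four ones by a contradiction argument using Lemma~\ref{lemma:two_in_row}, applied to the gaps between consecutive ones in the bottom row. Suppose for contradiction that the bottom row $r = h$ has at least five ones, in columns $c_1 < c_2 < \dots < c_5$ (possibly more). The key tool is that for any pair of bottom-row ones $M_{h,c} = M_{h,d} = 1$ with $d - c > 1$, Lemma~\ref{lemma:two_in_row} forces a "tower" of ones: since there is no row below the bottom row, possibility~(2) is vacuous, so possibility~(1) must hold, meaning there is a row $r'$ above $h$ such that $r'$ and every row above it has a one strictly between columns $c$ and $d$. So each wide gap in the bottom row is spanned by a full column-block of ones reaching up to the top row.

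First I would reduce to the case where the five ones are spread out enough that at least two of the gaps have width greater than one, so that Lemma~\ref{lemma:two_in_row} applies to them. I expect the truly adjacent ones (gaps of width exactly one) to be handled by the Cross lemma or by the fact that $M$ avoids $R$ and its reflections, which limits how ones can cluster; the definition of "potentially mnl" in the excerpt tells us adjacency of ones is tightly constrained. The main work is then to look at two separate wide gaps in the bottom row, say one determined by $c_1, c_2$ and a disjoint one determined by $c_4, c_5$. Each produces, via Lemma~\ref{lemma:two_in_row}(1), a one in the top row: one one lying strictly between $c_1$ and $c_2$, and another lying strictly between $c_4$ and $c_5$. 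Now the top row has two ones in well-separated columns, and I would turn the same machinery on the top row.

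The crux is to build a forbidden configuration — an $S_1$, $S_2$, $Q_1$, $Q_3$, or a reflection/rotation — from these forced top-row and bottom-row ones, most cleanly by invoking the Cross lemma. The idea is that once we have top-row ones nested between bottom-row ones in two disjoint gaps, we can locate a column $c_2$ carrying a top one and a bottom one straddling configuration that matches the hypothesis of the Cross lemma (ones at $M_{r_1,c_2}, M_{r_3,c_2}, M_{r_2,c_1}, M_{r_2,c_3}$ forming a cross), and then the fifth bottom-row one sits in a forbidden "outside" position $x \notin (c_1,c_3)$, $y \notin (r_1,r_3)$, contradicting the Cross lemma's conclusion that such an entry must be zero. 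The top/bottom symmetry gives the parenthetical top-row case for free.

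The hard part will be the bookkeeping of the adjacent (width-one) gaps and the possibility that the forced top-row ones collide or coincide in a single column, which could prevent a clean cross from forming; I expect to need a short case analysis splitting on how many of the four gaps between five consecutive bottom ones have width one versus greater than one, using $R$-avoidance to rule out long runs of adjacent ones in the bottom row and thereby guaranteeing at least two usable wide gaps that are far enough apart to produce top-row ones in distinct, separated columns. Once those two separated top-row ones are secured, pairing them with the surrounding bottom-row ones to instantiate the Cross lemma and exhibit a fifth one in a forbidden position should close the argument.
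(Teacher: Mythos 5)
Your opening move is sound and matches the paper's: restricted to the bottom row, possibility (2) of Lemma~\ref{lemma:two_in_row} is indeed vacuous, so every pair of bottom-row ones in columns $c,d$ with $d-c>1$ forces a one in the top row strictly between them (this is exactly Lemma~\ref{lemma:two_and_one}, which is what the paper actually invokes). But from there your plan has two genuine gaps. First, your reduction hinges on finding two \emph{disjoint wide gaps between consecutive ones}, and you propose to rule out runs of adjacent ones via $R$-avoidance. Neither part works: a run of adjacent ones in a single row does not contain $R$ (which needs two rows), and configurations such as bottom-row ones in columns $1,2,3,4,6$ have only one wide consecutive gap, so the case you need can simply fail to exist. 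The trick you are missing is to apply the gap lemma to \emph{non-consecutive} pairs: with five ones in columns $x<y<z<u<v$, the pairs $(x,z)$ and $(z,v)$ automatically satisfy $d-c>1$ because another one lies strictly between them. This removes all case analysis on gap widths.

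Second, your endgame via the Cross lemma is misapplied. The Cross lemma's hypothesis requires two ones in the \emph{same column} (the vertical bar of the cross, $M_{r_1,c_2}=M_{r_3,c_2}=1$), and nothing in your construction forces the top-row ones to share a column with anything; so you cannot "locate a column carrying a top one and a bottom one" in the required straddling position. No such machinery is needed: once Lemma~\ref{lemma:two_and_one} gives top-row ones at columns $a$ and $b$ with $x<a<z<b<v$, the four ones at $(1,a)$, $(h,z)$, $(1,b)$, $(h,v)$ alternate top, bottom, top, bottom in column order and therefore form a copy of $S_1$ outright, contradicting minimal non-linearity. That one-line contradiction is the paper's entire proof after the lemma application; your proposal reaches for heavier tools (Cross lemma, $R$-avoidance, case analysis) that either do not apply or do not suffice.
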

\begin{proof}
If there are at least five ones in the bottom row, let the first five be in columns $x<y<z<u<v$. By Lemma~\ref{lemma:two_and_one}, $M_{1,a}=M_{1,b}=1$ where $x<a<z$ and $z<b<v$, and $M$ contains $S_1$, a contradiction.
\end{proof}

\begin{lemma}
\label{lemma:no_seven_ones}
There cannot be more than six ones in any row of a minimally nonlinear matrix $M$.
\end{lemma}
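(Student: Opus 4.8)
The plan is to argue by contradiction: suppose some row $r$ of $M$ carries at least seven ones, and fix seven of them in columns $a_1 < a_2 < \cdots < a_7$. Since each of the pairs $(a_1,a_3)$, $(a_3,a_5)$, and $(a_5,a_7)$ has a one of row $r$ strictly between its two columns, each of these three pairs is a \emph{wide} pair in the sense that the column difference exceeds $1$, so Lemma~\ref{lemma:two_in_row} applies to each of them. (Note that $M$ is neither $S_2$ nor $Q_3$, since those patterns have only three ones, so the hypothesis of Lemma~\ref{lemma:two_in_row} is met.) For each pair, the lemma produces a monotone run of rows extending either above $r$ or below $r$, every one of which contains a one strictly between the two columns of that pair; since this run reaches an extreme row, an ``up'' pair forces the top row of $M$ to have a one strictly between its columns, and a ``down'' pair forces the bottom row to do so.

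Next I would apply the pigeonhole principle to the three ``up''/``down'' labels attached to the three pairs. Since there are three pairs and only two directions, at least two of the pairs share a direction; by up/down symmetry assume two of them are ``up''. Then the top row contains a one $b_1$ strictly inside one of the three ranges and a one $b_2$ strictly inside a later range, with $b_1 < b_2$, while two of the ones $a_3, a_5, a_7$ of row $r$ interleave them in the right way. Selecting the two ones of row $r$ lying respectively strictly between and strictly after $b_1$ and $b_2$ exhibits four cells forming the pattern $S_1$ in the two rows $1$ and $r$, contradicting the assumption that $M$ is minimally non-linear and hence avoids $S_1$ and its reflections.

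The routine-but-careful part, and the only real obstacle, is the bookkeeping of the three sub-cases for which pair of gaps agree in direction, namely $\{(a_1,a_3),(a_3,a_5)\}$, $\{(a_1,a_3),(a_5,a_7)\}$, or $\{(a_3,a_5),(a_5,a_7)\}$, and checking in each that the two forced top-row ones together with a correctly chosen pair of row-$r$ ones satisfy the strict inequalities $c_1 < c_2 < c_3 < c_4$ defining $S_1$. For instance, if $(a_1,a_3)$ and $(a_5,a_7)$ are both ``up'', then $b_1 \in (a_1,a_3)$ and $b_2 \in (a_5,a_7)$ give $b_1 < a_3 < b_2 < a_7$ with top-row ones at $b_1,b_2$ and row-$r$ ones at $a_3,a_7$, which is exactly $S_1$; the other two sub-cases are analogous and slightly easier since there the ranges are adjacent. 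I expect no genuine difficulty beyond confirming these inequalities and confirming that ``propagating up'' truly forces a one in the top row rather than merely in some row above $r$, which is precisely the content of the ``every row above $r'$'' clause in Lemma~\ref{lemma:two_in_row}.
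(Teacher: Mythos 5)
Your proof is correct and takes essentially the same approach as the paper's: apply Lemma~\ref{lemma:two_in_row} to the three gaps $(a_1,a_3)$, $(a_3,a_5)$, $(a_5,a_7)$, pigeonhole two of the forced extreme-row ones into the same (top or bottom) row, and extract $S_1$ or its reflection. (One harmless slip: $S_2$ and $Q_3$ each have four ones, not three, but your point stands since neither has seven ones in a single row.)
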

\begin{proof}
If there are at least seven ones in some row of $M$ that is not the top or bottom, fix the row and let the first seven ones be in columns $x<y<z<u<v<w<t$. By Lemma~\ref{lemma:two_in_row}, $M$ has ones in columns $c, d, e$, each in the first or last row, with $x < c < z$, $z < d < v$, and $v < e < t$. Thus $M$ contains $S_1$, a contradiction.
\end{proof}

The next few lemmas allow us to describe the structure of the ones in the top and bottom rows of a minimally nonlinear 0-1 matrix. 

\begin{lemma}
\label{lemma:four_ones}
If a minimally nonlinear matrix $M$ has four ones in the bottom row, then they are in columns $x,x+1,y,y+1$ with $y>x+2$. Moreover the top row of $M$ has either one or two ones, and they lie in columns $[x+2,y-1]$. If the top row has two ones, then they are adjacent to each other.
\end{lemma}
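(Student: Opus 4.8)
The plan is to pin down the four bottom ones first, then locate and count the top ones, using throughout Lemma~\ref{lemma:two_and_one} (and its vertical reflection) together with avoidance of $S_1$, $R$, and $Q_1$. Write the four bottom ones in columns $a<b<c<d$; there are exactly four by Lemma~\ref{lemma:no_five_ones}. For any two bottom ones more than one column apart, Lemma~\ref{lemma:two_and_one} produces a top one strictly between them, so the pairs $(a,c)$, $(b,d)$, $(a,d)$ each force such a top one. To show $b=a+1$, I would suppose $b-a\ge 2$, obtain a top one $p\in(a,b)$ and a top one $q\in(b,d)$, and read the four columns $a<p<b<q$ as bottom, top, bottom, top, a copy of a reflection of $S_1$, a contradiction. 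The mirror-image argument gives $d=c+1$. To force the gap $c\ge b+2$, I would assume the ones were consecutive $a,a+1,a+2,a+3$; then the pairs $(a,a+2)$ and $(a+1,a+3)$ force top ones at columns $a+1$ and $a+2$, and since these columns also carry bottom ones we obtain an $R$, a contradiction. Setting $x=a$ and $y=c$ yields the claimed columns $x,x+1,y,y+1$ with $y>x+2$.

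For the location of the top ones, the pair $(x+1,y)$ forces at least one top one $q\in[x+2,y-1]$ by Lemma~\ref{lemma:two_and_one}, so the stated interval is nonempty. I would then rule out any top one $p$ lying outside $[x+2,y-1]$. If $p\le x$, the columns $p<x+1<q<y$ read top, bottom, top, bottom, again exhibiting $S_1$. The boundary case $p=x+1$ needs a different pattern: here column $x+1$ carries ones in both the top and bottom rows, and reading columns $x+1,q,y$ as (both), top, bottom gives a vertical reflection of $Q_1$, a contradiction. By the left--right symmetry of the configuration, these two arguments also exclude $p\ge y$ and $p=y$, so every top one lies in $[x+2,y-1]$.

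Finally, for the count and the adjacency I would apply the vertical reflection of Lemma~\ref{lemma:two_and_one} to the top row: any two top ones more than one column apart would force a bottom one strictly between them, but all bottom ones sit at $x,x+1,y,y+1$, none of which is interior to $[x+2,y-1]$. Hence no two top ones can be at distance $\ge 2$; this single observation simultaneously caps the number of top ones at two and forces two of them, when present, to be adjacent. Combined with the forced $q$, the top row has one or two ones, adjacent if two.

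I expect the main obstacle to be the boundary case $p=x+1$ (and its mirror $p=y$): unlike the generic $p\le x$ case it does not yield $S_1$, and one must instead exploit the coincidence of a top and a bottom one in the same column to extract the correct reflection of $Q_1$. The only other care needed is bookkeeping the orientations of the $S_1$, $R$, and $Q_1$ copies so that each really is one of the forbidden patterns or its reflection.
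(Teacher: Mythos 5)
Your proof is correct and follows essentially the same route as the paper's: both rest on Lemma~\ref{lemma:two_and_one} (applied to pairs of bottom ones to force $x,x+1,y,y+1$ with $y>x+2$ via $S_1$ and $R$, then applied to the top row for the adjacency/count claim). The one place you go beyond the paper is in spelling out why no top one can sit at column $x+1$ or $y$ (the vertical reflection and rotation of $Q_1$), a boundary case the paper's proof asserts implicitly when it writes $M_{1,[1,x+1]}=M_{1,[y,w]}=0$.
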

\begin{proof}
By the same argument as in the proof of Lemma~\ref{lemma:no_five_ones}, no three ones in the bottom row of $M$ can be mutually non-adjacent. So the first one is adjacent to the second, and the third is adjacent to the fourth. If $y=x+2$, i.e. all four are adjacent, then by Lemma~\ref{lemma:two_and_one} we have $M_{1,x+1}=M_{1,y}=1$ and $M$ contains $R$.

If not all four are adjacent, i.e. $y>x+2$, then all the ones in the top row are in columns $[x+2,y-1]$, or else $M$ would contain $S_1$, $Q_1$, or their transformations. If any two of these ones are not adjacent, then by Lemma~\ref{lemma:two_and_one} a column between them has one at the bottom, a contradiction. 
\end{proof}

\begin{lemma}
If a minimally nonlinear matrix $M$ has exactly three ones in the bottom row, then either (1) they are in three consecutive columns and the top row only has a one in the middle column, or (2) two columns of them are adjacent, and the top row only has one or two adjacent ones in columns strictly between the two sets of columns. 
\end{lemma}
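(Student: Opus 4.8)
The plan is to mirror the structure of the proof of Lemma~\ref{lemma:four_ones}. Write the three bottom ones in columns $c_1<c_2<c_3$. First I would record that no three bottom-row ones can be pairwise non-adjacent: applying Lemma~\ref{lemma:two_and_one} to the pairs $(c_1,c_2)$ and $(c_2,c_3)$ would produce top-row ones $a\in(c_1,c_2)$ and $b\in(c_2,c_3)$, and then the columns $a<c_2<b<c_3$ would exhibit $S_1$ (top ones at $a,b$, bottom ones at $c_2,c_3$), a contradiction. Hence at least two of the three bottom ones are adjacent, which splits into exactly two cases: all three consecutive, or a single adjacent pair together with one isolated one. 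Since $M$ has three ones in its bottom row, $M$ is none of $S_1,Q_1,R$ or their reflections (each of those has at most two ones per row), so $M$ genuinely avoids all of them, which I will use freely.

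In the consecutive case $c_1,c_1+1,c_1+2$, Lemma~\ref{lemma:two_and_one} applied to $(c_1,c_1+2)$ forces a top-row one in the single column $c_1+1$, giving the middle one of conclusion~(1). To show this is the only top-row one, I would assume a second top-row one at a column $a$ and derive a forbidden pattern in each position: $a=c_1$ or $a=c_1+2$ yields $R$ (two full columns carrying the bottom pair); $a<c_1$ yields a reflection of $Q_1$ on columns $a,c_1,c_1+1$; and $a>c_1+2$ yields $Q_1$ on columns $c_1+1,c_1+2,a$. This rules out every other position and establishes~(1).

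In the single-pair case I would first reduce by horizontal reflection to the configuration where the adjacent pair is $c_1,c_1+1$ and the isolated one is $c_3\ge c_1+3$, which matches the symmetric conclusion~(2). Lemma~\ref{lemma:two_and_one} applied to $(c_1+1,c_3)$ gives a top-row one $p$ in $[c_1+2,c_3-1]$, i.e.\ strictly between the two groups. I would then confine all top-row ones to this interval: a top one $b\le c_1$ creates $S_1$ on $b,c_1,p,c_3$ (or a reflection of $Q_1$ when $b=c_1$), a top one at $c_1+1$ creates a reflection of $Q_1$ on $c_1+1,p,c_3$, and symmetrically a top one $b\ge c_3$ creates a reflection of $S_1$ (or of $Q_1$ when $b=c_3$). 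Finally, to bound the count and force adjacency, I would invoke the vertical reflection of Lemma~\ref{lemma:two_and_one}: two non-adjacent top-row ones in $[c_1+2,c_3-1]$ would force a bottom-row one strictly between them, but the only bottom ones are $c_1,c_1+1,c_3$, none of which lies in $(c_1+2,c_3-1)$. Hence any two top-row ones are adjacent, and three top ones would make the outer two non-adjacent, so there are at most two; this is exactly conclusion~(2).

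The main obstacle I anticipate is not any single step but the bookkeeping of the pattern matching: for each candidate position of a stray top-row one I must name the correct three or four columns and check that the induced $2\times 3$ or $2\times 4$ submatrix is literally one of $R$, $S_1$, $Q_1$, or a named reflection, taking care that the orientation (which of top and bottom carries which row of the pattern) is correct. The reduction lemmas do all the real work; the genuine risk is mislabeling a reflection, so I would fix once and for all that the top row is row $1$ and the bottom row is row $h$, and verify each containment against the explicit pattern before invoking avoidance.
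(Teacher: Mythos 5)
Your proposal is correct and follows essentially the same route as the paper: ruling out three pairwise non-adjacent bottom ones via $S_1$, then splitting into the all-consecutive case (where Lemma~\ref{lemma:two_and_one} pins the top one at the middle column and $R$/$Q_1$-type patterns exclude others) and the single-pair case handled exactly as in Lemma~\ref{lemma:four_ones}. The paper's own proof is terse and simply defers the second case to Lemma~\ref{lemma:four_ones}; you have merely filled in the same pattern-checking details it leaves implicit.
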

\begin{proof}
At least two of the three ones in the bottom row are adjacent, otherwise we would have three mutually non-adjacent ones. If the columns containing these ones are consecutive and numbered $x,x+1,x+2$, then by Lemma~\ref{lemma:two_and_one}, $M_{1,x+1}=1$ and the top row has no other one, or else $M$ would contain $R$, $Q_1$, or its transformation.

If only two of these columns are adjacent, say $x,x+1,y$ with $y>x+2$. Then the result follows from the same argument as in the proof of Lemma~\ref{lemma:four_ones}.
\end{proof}

Combining the preceding results, we are able to find all possibilities for the relative positions of the ones in the top and bottom rows of a minimally nonlinear 0-1 matrix. 

\begin{theorem}
If $M$ is a minimally nonlinear matrix, then the matrix obtained by restricting $M$ to the first and last rows and removing any empty columns must be one of the following, its reflection, or its rotation by $\pi$.
$$
\begin{pmatrix}
\bullet & \bullet \\
\bullet & \bullet \\
\end{pmatrix}
\begin{pmatrix}
\bullet &         & \bullet \\
\bullet & \bullet \\
\end{pmatrix}
\begin{pmatrix}
\bullet &         & \bullet \\
        & \bullet &         & \bullet\\
\end{pmatrix}
$$
$$
\begin{pmatrix}
        &         & \bullet &         &        \\
\bullet & \bullet &         & \bullet & \bullet\\
\end{pmatrix}
\begin{pmatrix}
        &         & \bullet & \bullet &         &        \\
\bullet & \bullet &         &         & \bullet & \bullet\\
\end{pmatrix}
$$
$$
\begin{pmatrix}
        & \bullet &         \\
\bullet & \bullet & \bullet \\
\end{pmatrix}
\begin{pmatrix}
        &         & \bullet &         \\
\bullet & \bullet &         & \bullet \\
\end{pmatrix}
\begin{pmatrix}
        &         & \bullet & \bullet &         \\
\bullet & \bullet &         &         & \bullet \\
\end{pmatrix}
$$
$$
\begin{pmatrix}
        & \bullet & \bullet &        \\
\bullet &         &         & \bullet\\
\end{pmatrix}
\begin{pmatrix}
        & \bullet &        \\
\bullet &         & \bullet\\
\end{pmatrix}
\begin{pmatrix}
        & \bullet \\
\bullet & \bullet \\
\end{pmatrix}
\begin{pmatrix}
        & \bullet & \bullet \\
\bullet & \bullet &         \\
\end{pmatrix}
\begin{pmatrix}
        &         & \bullet & \bullet \\
\bullet & \bullet &         &         \\
\end{pmatrix}
\begin{pmatrix}
        &         & \bullet \\
\bullet & \bullet &         \\
\end{pmatrix}
$$
$$
\begin{pmatrix}
        & \bullet \\ 
\bullet &         \\
\end{pmatrix}
\begin{pmatrix}
\bullet \\ 
\bullet \\
\end{pmatrix}
$$

\end{theorem}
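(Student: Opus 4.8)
The plan is to reduce the theorem to the three preceding lemmas (on four, three, and two ones in the bottom row) by a single symmetry normalization, after which nothing remains but a short treatment of the all-singletons case. First I would invoke Lemma~\ref{lemma:no_five_ones} to conclude that each of the top and bottom rows of a minimally non-linear matrix $M$ carries at most four ones, and note that each carries at least one (otherwise that boundary row could be deleted, contradicting minimality). Writing $m$ for the larger of the two boundary-row weights, I observe that reversing the order of the rows of $M$ and/or the order of its columns preserves the number of ones and all forbidden-pattern conditions, hence carries minimally non-linear matrices to minimally non-linear matrices and carries each candidate configuration to its rotation or reflection. This lets me assume without loss of generality that it is the bottom row that attains the weight $m$, at the cost of the clause ``or its rotation or reflection'' in the conclusion.

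I would then split into the cases $m\in\{4,3,2,1\}$. For $m=4$, Lemma~\ref{lemma:four_ones} says the bottom ones occupy columns $x,x+1,y,y+1$ with $y>x+2$ and the top row has one or two adjacent ones inside $[x+2,y-1]$, which are precisely the two patterns in the second displayed row of the statement. For $m=3$, the lemma on three ones in the bottom row yields exactly the three patterns of the third displayed row, and for $m=2$ the lemma on two ones in the bottom row yields exactly the nine patterns of the first and fourth displayed rows. In every one of these cases the requirement that ``no extra columns are allowed between adjacent ones in the same row'' is already part of the cited lemma's conclusion (it is what avoidance of $R$, $Q_1$, $S_1$ and their images forces), so no further checking is needed.

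The only remaining case is $m=1$, where the top and bottom rows each carry a single one, in columns $a$ and $c$ respectively. Here the sole datum is the horizontal order of $a$ and $c$: if $a=c$ we obtain $\left(\begin{smallmatrix}\bullet\\ \bullet\end{smallmatrix}\right)$, and if $a\neq c$ we obtain $\left(\begin{smallmatrix}&\bullet\\ \bullet&\end{smallmatrix}\right)$ up to reflection. Since a row with a single one contains no pair of same-row ones, the spacing convention is vacuous, so these two patterns (together with the reflection) exhaust all possibilities and match the final displayed row.

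Because the argument is an assembly, the real effort is organizational rather than a single hard lemma. The step needing the most care is the symmetry normalization: I must verify that the four dihedral images of $M$ obtained by reversing rows and/or columns all preserve minimal non-linearity, and that after normalizing the heavier boundary row to the bottom, the representatives produced by the three lemmas together with their rotations and reflections form a complete list. A secondary check is that the enumeration by $m$ is genuinely exhaustive and non-overlapping; in particular that $m=4$ and $m=3$ each force the lighter boundary row to weight at most two, which is already guaranteed by the cited lemmas, so the heavier row always falls under exactly one of the four cases.
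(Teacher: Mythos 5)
Your proposal is correct and is essentially the paper's own (implicit) argument: the paper states this theorem without a separate proof precisely because it is meant as the direct assembly of Lemma~\ref{lemma:no_five_ones}, Lemma~\ref{lemma:four_ones}, and the unlabeled three-ones and two-ones lemmas, organized exactly as you do by the count of ones in the heavier boundary row. Your additional bookkeeping---the dihedral normalization placing the heavier row at the bottom, the observation that boundary rows are nonempty, and the explicit treatment of the one-one case---merely spells out steps the paper leaves implicit.
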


Together, the next two results show that if a minimally nonlinear 0-1 matrix has ones in the same row with exactly $d$ columns between them, then within these columns there are at most $2d-1$ rows above and $2d-1$ rows below with ones.

\begin{lemma}
\label{lemma:two_and_two}
Minimally nonlinear matrices avoid the pattern $Y$ below. Note that the second row of $Y$ is empty.
$$
\begin{pmatrix}
         & \bullet &         \\
         &         &         \\
         & \bullet &         \\
 \bullet &         & \bullet \\
\end{pmatrix}
$$
\end{lemma}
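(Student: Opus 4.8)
The plan is to suppose a minimally non-linear $M$ contains the displayed pattern and then exhibit a copy of $Q_3$ or $S_2$ (up to reflection or rotation) on just three rows of $M$; since $Q_3$ and $S_2$ are themselves minimally non-linear while $M$ has at least four rows, this proper containment is the desired contradiction.

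First I would unwind the containment. Because the second row of the pattern is empty, $M$ contains it precisely when there are rows $a<b<c$ with $b\ge a+2$ together with columns $t_1<t_2<t_3$ satisfying $M_{a,t_2}=M_{b,t_2}=M_{c,t_1}=M_{c,t_3}=1$, the slack $b\ge a+2$ accounting for the empty middle row. Note that $M$ then has at least four rows, so in particular $M$ is neither $S_2$ nor $Q_3$; moreover the $4\times 3$ rotation of $S_2$ (the only rotation of $S_2$ or $Q_3$ with four rows) has no row containing two ones and so cannot contain the displayed pattern. Hence Lemma~\ref{lemma:two_in_row} and its rotation are available for $M$.

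Next I would feed the vertical pair $M_{a,t_2}=M_{b,t_2}=1$, which has a gap since $b-a>1$, into the vertical analogue of Lemma~\ref{lemma:two_in_row} (valid because rotating $M$ preserves minimal non-linearity). This yields a block of columns reaching either the left edge or the right edge of $M$, every column of which carries a one in some row strictly between $a$ and $b$. In the left case the leftmost column supplies a one $M_{g,f}=1$ with $a<g<b$ and $f\le t_1$; combining it with $M_{b,t_2}$, $M_{c,t_1}$, $M_{c,t_3}$ on the three rows $g<b<c$ gives, when $f=t_1$, a rotation of $Q_3$, and when $f<t_1$, a copy of $S_2$. The right case is the mirror image and produces the reflections of these two patterns. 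Either way $M$ properly contains a minimally non-linear matrix, a contradiction.

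The step I expect to be the crux is guaranteeing that the one extracted from between rows $a$ and $b$ sits at or outside the boundary column $t_1$ (respectively $t_3$) rather than strictly inside the corridor $(t_1,t_3)$: a one landing strictly between $t_1$ and $t_2$ completes no forbidden configuration, so the mere existence of some one between $a$ and $b$ is not enough. This is exactly why I would invoke the full ``block extends to an edge'' strength of Lemma~\ref{lemma:two_in_row}, which forces a one out in the extreme column and hence at or beyond $t_1$ (respectively $t_3$), letting the $Q_3$ or $S_2$ pattern close up.
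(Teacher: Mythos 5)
Your proof is correct, but it takes a genuinely different route from the paper. The paper argues locally: it considers any row of $M$ lying strictly between the two ones in the middle column and checks, by the same $S_2$/$Q_3$-orbit case analysis you use, that such a row can have ones \emph{only} in the middle column (positions strictly inside the corridor are killed using the topmost one to form a four-row rotation of $S_2$, positions at or outside columns $t_1,t_3$ using the bottom pair); it then concludes that a row whose single one sits in the same column as, and vertically adjacent to, other ones makes $M$ non-minimal --- a final step that silently leans on standard structural facts about minimally non-linear matrices (no empty rows, and removability of such a redundant row). Your proof sidesteps exactly that finishing step: by rotating Lemma~\ref{lemma:two_in_row} and using its full ``block of columns reaching the edge'' conclusion, you force a one into the extreme column of $M$, hence at or outside the corridor $[t_1,t_3]$, where it immediately closes up $S_2$ or a rotation of $Q_3$ on three rows, and proper containment of a non-linear pattern gives the contradiction directly. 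What each approach buys: the paper's argument is elementary and does not need Lemma~\ref{lemma:two_in_row} at all, but as written it is terse and its last sentence hides a nontrivial appeal to the redundant-row fact; yours is fully self-contained modulo Lemma~\ref{lemma:two_in_row}, never needs the ``no empty rows'' or contraction facts, and your explicit check that $M$ cannot be a four-row rotation of $S_2$ (so the rotated lemma really applies) is a hypothesis-verification detail the paper never has to confront. One caveat: Lemma~\ref{lemma:two_in_row} as stated excludes only ``$S_2$ and $Q_3$,'' but for it to be true the exclusion must be read as covering their reflections and rotations (e.g., the column-reflection of $S_2$ violates its conclusion); your verification that every matrix in those orbits with at least four rows has one one per row, hence cannot contain the displayed pattern, covers this stronger reading, so your application is sound.
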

\begin{proof}
Suppose for contradiction that a minimally nonlinear matrix $M$ contains $Y$. Let row $r$ be the row of $M$ that contains the second row of the copy of $Y$. Note that row $r$ can only contain at most a single one, and that one has to be in the same column as the top entry of the copy of $Y$, or else $M$ would contain $S_1$, $S_2$, or their transformations. Construct $M'$ by removing the row of $M$ that contains the second row of the copy of $Y$. We have $ex(n, M)\leq 2ex(n, M')$, see Lemma 2.3(g) of \cite{gtar}, so $M'$ is also nonlinear, a contradiction.
\end{proof}

\begin{theorem}
If two ones in a minimally nonlinear matrix $M$ are in the same row and have exactly $d>0$ other columns between them, then within these $d$ columns there are at most $2d-1$ rows above and $2d-1$ rows below with ones.
\end{theorem}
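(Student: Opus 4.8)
The plan is to prove the two halves (rows above and rows below the common row) separately; since a $180^\circ$ rotation of a minimally non-linear matrix is again minimally non-linear, it suffices to bound the rows above. Write the two given ones as $M_{r,c}=M_{r,c'}=1$ with $c'=c+d+1$, so that the $d$ columns strictly between them are $C=\{c+1,\dots,c+d\}$, and let $T$ be the set of rows strictly above $r$ that contain a one in some column of $C$. The goal is then $|T|\le 2d-1$.

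First I would extract a per-column bound from Lemma~\ref{lemma:two_and_two}. Fix a column $j\in C$ and suppose two rows $p<q$ above $r$ both carry a one in column $j$. Taking the outer ones $M_{r,c}=M_{r,c'}=1$ as the bottom row of the forbidden pattern (with $b_1=c$, $b_2=j$, $b_3=c'$) and using $p,q$ as the two rows carrying the middle-column one, the pattern of Lemma~\ref{lemma:two_and_two} is contained as soon as there is room for a row strictly between $p$ and $q$, that is, whenever $q\ge p+2$. Hence each column of $C$ carries at most two ones above $r$, and when it carries two they lie in adjacent rows. Summing over the $d$ columns gives at most $2d$ one-incidences above $r$ inside $C$, and since every row of $T$ accounts for at least one incidence, $|T|\le 2d$.

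The remaining, and main, step is to sharpen $2d$ to $2d-1$ by excluding the extremal configuration, in which every column of $C$ carries exactly two ones, the resulting $d$ adjacent row-pairs are pairwise disjoint, and these pairs occupy $2d$ distinct rows. Here I would show that two disjoint adjacent pairs in two columns $j<j'$ of $C$, together with the outer ones $M_{r,c}=M_{r,c'}=1$, are incompatible with minimal non-linearity. Splitting into cases according to whether the pair in column $j$ lies above or below the pair in column $j'$, I would use Lemma~\ref{lemma:two_and_one} (which forces an additional one in the top row from the outer pair) and Lemma~\ref{lemma:two_in_row}, and then combine such a forced one with one one from each pair to realize $S_1$, $S_2$, $Q_1$, $Q_3$, or a reflection or rotation; alternatively, the Cross lemma, applied with a middle column of $C$ and the two outer ones as the left and right arms, should force one of the pairs to collapse. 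In every case the $2d$ ones cannot be distributed over $2d$ distinct rows, so $|T|\le 2d-1$, and the bound for the rows below follows by rotating $M$ by $180^\circ$.

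The hard part will be this last step: verifying that no placement of the $d$ disjoint adjacent pairs can simultaneously avoid every forbidden pattern, across all left-right orders of $j,j'$, all up-down orders of their pairs, and allowing for ones of $M$ that may also sit in columns $c$, $c'$, or outside $[c,c']$. I expect the cleanest route is to isolate a single small obstruction, built from one one of each of two pairs plus one forced top-row one, that is itself non-linear, so that its mere presence contradicts minimality and the case analysis reduces to checking that this obstruction appears in each relative arrangement of the pairs.
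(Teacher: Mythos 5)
Your reduction by $180^\circ$ rotation and your first two steps (the per-column bound via Lemma~\ref{lemma:two_and_two}, giving at most two ones per column of $C$ above $r$, adjacent when there are two, hence at most $2d$ ones and $|T|\le 2d$) are correct and coincide with the paper's argument. The genuine gap is the step you explicitly defer, excluding $|T|=2d$, and the strategy you sketch for it cannot be completed. First, the six ones formed by two disjoint vertical pairs in columns $j<j'$ together with $M_{r,c}=M_{r,c'}=1$ contain \emph{no} forbidden pattern: only row $r$ has two ones, which already rules out $R$, $Q_1$, $S_1$ and their reflections, and every reflection or rotation of $S_2$ or $Q_3$ requires either a second row with two ones, a one above $r$ lying weakly outside the interval $(c,c')$, or two ones in a single column with some row strictly between them --- none of which vertically adjacent pairs inside $(c,c')$ can supply. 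Second, the Cross lemma never applies here: it needs the two-one row to lie strictly \emph{between} the two rows of the same-column pair, whereas $r$ lies below both rows of every pair. Third, for $d=1$ there is only one column in $C$, so your two-pair obstruction is vacuous; worse, the $d=1$ extremal configuration --- a vertical pair above the two outer ones, i.e.\ ones in relative positions $(1,2),(2,2),(3,1),(3,3)$ --- is itself a \emph{linear} pattern (a vertical doubling of the linear three-one pattern with ones at $(1,2),(2,1),(2,3)$), so no argument that exhibits a non-linear subpattern can ever rule it out.

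The missing idea, which is how the paper actually finishes, uses minimality through row deletion rather than pattern containment. If $|T|=2d$, then every row of $T$ carries exactly one one of $C$ and every column of $C$ carries exactly two, in adjacent rows; the topmost row $p$ of $T$ must therefore have its partner directly below it, so rows $p$ and $p+1$ have their unique ones in the same column. Moreover row $p$ has no ones anywhere else in $M$: a one weakly left of $c$ or weakly right of $c'$ would form $S_1$ or a reflection of $Q_1$ with the two outer ones, and $M$, having more than two rows, cannot equal one of those matrices, so this would contradict minimality. Hence $M$ is a vertical doubling of the proper submatrix $M'$ obtained by deleting row $p$, so $\ex(n,M)=\Theta(\ex(n,M'))$ by the doubling lemma, which makes $M'$ non-linear and contradicts the minimality of $M$. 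This contraction step --- the paper's ``making $M$ still non-linear when the top row is removed'' --- is precisely what your proposal lacks, and the case analysis you outline cannot substitute for it.
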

\begin{proof}
Consider the intersection of the $d$ columns and all the rows above. Each column has no more than two ones in the intersection according to Lemma~\ref{lemma:two_and_two}, and no column has non-adjacent ones in the intersection. If $2d$ rows are nonempty in the intersection, then every column in the intersection has exactly two ones in the intersection, and every nonempty row in the intersection has a single one in $M$, or else $M$ would contain $S_1$ or $Q_1$ or their transformations. Moreover every row in the intersection above the first nonempty row must be nonempty in the intersection, or else $M$ would contain $S_2$, $Q_3$, or their transformations since $M$ has no empty rows. Thus the top two rows in the intersection have ones in the same column and zeroes elsewhere, making $M$ still nonlinear when the top row is removed, see Lemma 2.3(d) of \cite{gtar}, a contradiction. Thus at most $2d-1$ rows in the intersection are nonempty.
\end{proof}

\section*{Acknowledgements}

The authors thank the anonymous referee for helpful comments and suggestions.



\end{document}